\documentclass[12pt]{amsart}

\usepackage[english]{babel}
\usepackage{amsmath,amssymb,amsfonts,amsthm,amsopn,amstext,amsxtra,amscd}
\usepackage{bm,mathrsfs,mathtools}
\usepackage{cite}
\usepackage{graphicx}
\usepackage{float}
\usepackage{xcolor}
\usepackage{url}
\usepackage{todonotes}

\usepackage[colorlinks,linkcolor=blue,anchorcolor=blue,citecolor=blue,backref=page]{hyperref}
\hypersetup{breaklinks=true}

\theoremstyle{plain}
\newtheorem{theorem}{Theorem}[section]
\newtheorem{lemma}[theorem]{Lemma}

\newtheorem{proposition}[theorem]{Proposition}

\theoremstyle{definition}

\theoremstyle{remark}

\numberwithin{equation}{section}
\numberwithin{table}{section}

\DeclareMathOperator{\Kl}{Kl}

\newcommand{\eps}{\varepsilon}
\newcommand{\om}{\omega}
\newcommand{\wt}{\widehat}

\newcommand{\e}{\mathbf{e}}
\newcommand{\calB}{\mathcal{B}}
\newcommand{\calP}{\mathcal{P}}
\newcommand{\abs}[1]{\left|#1\right|}

\title[Kloosterman sign changes with $P_5$ moduli]
{Kloosterman sign changes with moduli having at most five prime factors}

\author{Yixiu Xiao}
\address{School of Mathematical Sciences, Shanghai Jiao Tong University, 800 Dongchuan Road, Shanghai 200240, China}
\email{yixiuxiao98@gmail.com}

\subjclass[2020]{Primary 11L05; Secondary 11N36}
\keywords{Kloosterman sums, sign changes, almost-prime moduli, Selberg sieve, truncated divisor functions}

\begin{document}

\begin{abstract}
On square-free moduli $q\in(X,2X]$ having at most five prime factors, we prove
that each sign of the normalized Kloosterman sum $\Kl(1;q)$ occurs
$\gg X/\log X$ times.  This improves the recent unconditional result of Zhang
and Zhong for moduli with at most six prime factors.  Building on their analytic estimates and optimized Selberg
sieve, we replace their truncated divisor penalty by a geometric half-weight.
The new weight retains the $P_5$ exclusion threshold and is a positive linear
combination of two standard two-parameter truncated divisor weights, so the
Zhang--Zhong transference argument applies without alteration.  After
transference, the relevant pointwise coefficient is reduced from $5/16$ to
$5/32$, which yields a positive final sieve margin.
\end{abstract}

\maketitle

\section{Introduction}

For a positive integer $q$ and an integer $m$ coprime to $q$, write
\[
  \Kl(m;q)=\frac1{\sqrt q}\sum_{\substack{x\bmod q\\(x,q)=1}}
  \e\left(\frac{mx+\overline{x}}{q}\right),
  \qquad \e(z)=e^{2\pi iz}.
\]
The horizontal distribution of $\Kl(1;q)$ as the modulus varies is a
fundamental and difficult problem.  Even the assertion that $\Kl(1;p)$ takes
both signs infinitely often over prime moduli remains open.  A breakthrough of
Fouvry and Michel \cite[Theorems~1.2--1.3]{FM}, based in part on estimates for
sums of absolute values of exponential sums developed in \cite{FM03},
established that, for each $\sigma\in\{+1,-1\}$,
\begin{equation}\label{eq:FM-result}
  \#\left\{
  X<q\le2X:
  \begin{array}{l}
    \mu^2(q)=1,\ p\mid q\Longrightarrow p>X^{1/23.9},\\
    \sigma\Kl(1;q)>0
  \end{array}
  \right\}
  \gg \frac{X}{\log X}.
\end{equation}
In particular, every modulus counted in \eqref{eq:FM-result} has at most
twenty-three distinct prime factors.  As usual, $P_r$ denotes an integer with
at most $r$ prime factors.  Subsequent work progressively lowered the number
of permitted prime factors; in particular, Matom\"aki obtained $P_{15}$
\cite{Matomaki}, while Xi ultimately obtained $P_7$ \cite{Xi1,Xi2,XiCorr}.
Drappeau and Maynard proved a conditional $P_2$ result in the presence of a
Landau--Siegel zero \cite{DM}.

Most recently, Zhang and Zhong proved unconditionally that, for each
$\sigma\in\{+1,-1\}$,
\[
  \#\{X<q\le2X:\ \mu^2(q)=1,\ \om(q)\le6,\
  \sigma\Kl(1;q)>0\}
  \gg \frac{X}{\log X}.
\]
This is the quantitative form of \cite[Theorem~1.2]{ZZv2}.  Their principal innovation is a
truncated divisor function whose support depends on the number of prime
factors of the modulus.  The purpose of the present paper is to show that a
geometric modification of this weight, while retaining the analytic inputs of
Zhang and Zhong, lowers the unconditional result from $P_6$ to $P_5$.

The main result is the following.

\begin{theorem}[Main theorem]
\label{thm:main-intro}
There exists $c_0>0$ such that, for each $\sigma\in\{+1,-1\}$ and all sufficiently large $X$,
\[
  \#\{X<n\le2X:\ \mu^2(n)=1,\ \om(n)\le5,\ \sigma\Kl(1;n)>0\}
  \ge c_0\frac X{\log X}.
\]
In particular, $\Kl(1;q)$ changes sign infinitely often as $q\to\infty$ along square-free moduli $q$ with $\om(q)\le5$.
\end{theorem}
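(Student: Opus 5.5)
The plan is to follow the Fouvry--Michel / Zhang--Zhong sieve framework and change only the penalty weight. Fix $\sigma\in\{\pm1\}$ and a smooth bump $\Phi$ supported in $(1,2)$. Let $\lambda_d$ be the optimized Selberg sieve weights of \cite{ZZv2} with level $D=X^{\vartheta}$, and set $\Lambda(n)=(\sum_{d\mid n}\lambda_d)^2\ge0$. Consider the weighted sum
\[
  S_\sigma(X)=\sum_{n}\mu^2(n)\Phi(n/X)\,\Lambda(n)\,\bigl(1-W(n)\bigr)\bigl(|\Kl(1;n)|+\sigma\Kl(1;n)\bigr),
\]
where $W(n)\ge0$ is the new penalty. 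The penalty is designed so that $1-W(n)\le0$ whenever $\omega(n)\ge6$. Since $|\Kl|+\sigma\Kl\ge0$ vanishes unless $\sigma\Kl(1;n)>0$, only $n$ with $\omega(n)\le5$ and $\sigma\Kl(1;n)>0$ can contribute positively. By Deligne's bound, $|\Kl(1;n)|\le 2^{\omega(n)}\le 32$ on those $n$, and $\Lambda(n)\ll_\eps X^{\eps}$ (in fact $\Lambda(n)\ll1$ on $n$ free of small primes, which I will build into the support). Hence $S_\sigma(X)\gg X/\log X$ gives the count in Theorem~\ref{thm:main-intro} by the usual Cauchy--Schwarz/bounded-weight argument.

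\textbf{Step 1: choose $W$.} I take $W$ to be the ``geometric half-weight''. It is a positive linear combination $W=\alpha W_{1}+\beta W_{2}$ of two of the standard two-parameter truncated divisor weights $W_j(n)=\sum_{d\mid n,\ d\in I_j}c_j(d)$ of Zhang--Zhong. The parameters are tuned so that $W(n)\ge1$ for every squarefree $n$ in $(X,2X]$ whose prime factors all exceed $X^{1/\tau}$ and which has $\omega(n)\ge6$; this is a combinatorial check on the simplex of prime-factor exponents. Because $W$ is a positive combination of the Zhang--Zhong weights, the two analytic inputs apply to each summand separately, with no new analysis:
\begin{itemize}
\item[(a)] the asymptotic for $\sum\Lambda(n)W_j(n)|\Kl(1;n)|$, obtained through the Fouvry--Michel mean-value estimates for $|\Kl|$ on almost-primes;
\item[(b)] the upper bound for $\sum\Lambda(n)W_j(n)\Kl(1;n)$, obtained by transference to sums of Kloosterman sums and Kuznetsov/bilinear estimates.
\end{itemize}

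\textbf{Step 2: main terms.} I evaluate
\[
  \sum\Lambda(n)|\Kl(1;n)|\ \ge\ c_1\frac{X}{\log X}
\]
with the constant $c_1$ of \cite{ZZv2}. Using (a) for the penalty, I then subtract $\sum\Lambda W|\Kl|$, whose main term is a Selberg-sieve integral. The signed terms $\sum\Lambda\Kl$ and $\sum\Lambda W\Kl$ are bounded through (b). The transference produces a pointwise coefficient multiplying $W$. For the Zhang--Zhong weight this coefficient was $5/16$; for the half-weight it should become $5/32$, because the weight halves its value on the critical configurations while still reaching $1$ on $P_{\ge6}$ geometry.

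\textbf{Step 3 and main obstacle.} Finally I assemble everything into a single margin
\[
  S_\sigma(X)\ \ge\ \bigl(M_0-M_W-E_{\Kl}\bigr)\frac{X}{\log X},
\]
and must verify numerically that the bracket is positive. This is a multidimensional integral over the prime-factor simplex, optimized over the sieve polynomial and the weight parameters. I expect this to be the hardest point: with $P_5$ the margin is thin, so it requires a rigorous numerical evaluation with error control.
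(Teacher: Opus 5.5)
Your outline has the right overall shape, but it leaves out every step that actually carries the proof, and it misidentifies the analytic input.

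\textbf{The weight and its two properties.} You never define the ``geometric half-weight.'' In the paper it is $\tau_{5,t}(n;\alpha_0)=\alpha_0^3\sum_{d\in\calB(n)}\bigl(\tfrac12+\tfrac12t^{\om(n/d)-3}\bigr)$ with $0<t\le 1/7$. This equals $\tfrac12\tau(n;\alpha_0,1)+\tfrac1{2t^3}\tau(n;\alpha_0,t)$, because every $d\in\calB(n)$ has $\om(d)=3$. Both properties you assert then need proofs.

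The threshold $\tau_{5,t}\ge10\alpha_0^3$ for $\om(n)\ge6$ is a uniform counting fact, not a check over a simplex of exponents. When $\om(n)=6$ one has $\#\calB(n)=10$, and the weight there is full ($h_t(3)=1$), not halved. When $\om(n)\ge7$ one has $\#\calB(n)\ge20$, and each divisor gets weight at least $\tfrac12$.

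The coefficient $5/32$ is a computation. After transference the weight equals $(4\alpha_0/(3\pi))^3\,4^{r}\,\#\calB(n)\,(1+t^{r-6})/2^{r+1}$ with $r=\om(n)$. The binding cases are $r=6$, using $\#\calB(n)=10$, and $r=7$, using $\#\calB(n)\le\binom73=35$ together with $t\le1/7$. Your heuristic that the weight ``halves on the critical configurations'' identifies neither case and does not explain the constraint on $t$.

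\textbf{The analytic input.} Step~2 relies on an estimate that is not available. Bounding $\sum\Lambda W\Kl$ by Kuznetsov or bilinear methods would require cancellation of $\Kl(1;n)$ against the non-smooth factor $\beta^{\om(n/d)}$ with $d$ as large as $n^{1/2}$. That is beyond what level-of-distribution arguments give, and it is not among the Zhang--Zhong inputs.

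The paper never needs it. From $0\le\abs{\Kl(1;n)}+\sigma\Kl(1;n)\le2\abs{\Kl(1;n)}$ it gets $\mathcal S^\sigma_{t,\rho}(X)\ge\rho R_1^\sigma(X)-2R_{2,t}(X)$. So only an upper bound for the absolute-value sum is required. The transference $\tau(n;\alpha,\beta)\mapsto\tau(n;8\alpha/(3\pi),2\beta)$ is a statement about $\abs{\Kl}$, not about the signed sum.

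\textbf{The numerics.} No new multidimensional computation is needed. The pointwise bound lets you reuse the Zhang--Zhong Selberg majorant verbatim, halving $6.27044\alpha_0^3$ to $3.13522\alpha_0^3$. Then any $\rho=c\alpha_0^3$ with $2\cdot3.13522/0.76235<c<10$ closes the sieve, for example $c=19/2$.

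As written, your Step~3 leaves the sign of the margin undetermined, so the argument does not reach the theorem.
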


The proof uses three precise inputs from Zhang--Zhong: the lower bound for the
sign-detecting term in their Proposition~2.1, the $R_2$ transference carried
out in their Section~4, and the optimized Selberg majorant in the proof of
their Proposition~2.2; see \cite[Propositions~2.1--2.2 and Sections~4--5]{ZZv2}.
In the normalization used below, these give a lower bound of size
$0.76235\,\wt g(1)X/\log X$, transfer the Kloosterman absolute value to
divisor weights with transformed parameters, and bound a Selberg majorant
with coefficient $5/16$ by
$6.27044\alpha_0^3\wt g(1)X/\log X$.

Our modification changes only the divisor weight used in the error term.
The geometric half-weight is arranged so that every square-free integer
with at least six prime factors is still excluded by the final sieve
inequality, while the transferred coefficient in the Zhang--Zhong
majorant is reduced from \(5/16\) to \(5/32\).  Consequently the relevant
\(R_2\)-constant is halved, from \(6.27044\alpha_0^3\) to
\(3.13522\alpha_0^3\).  This creates a nonempty admissible window for the
final cutoff \(\rho=c\alpha_0^3\); for example \(c=19/2\) lies in this
window.  The final sieve sum is therefore positive, while all terms with
\(\omega(n)\ge6\) remain excluded, forcing the positive contribution to
come from \(P_5\) moduli.

The rest of the paper is organized as follows.
Section~\ref{sec:zz-inputs} fixes notation and records the precise
Zhang--Zhong estimates used below.  Section~\ref{sec:weight} introduces
the geometric half-weight and proves the \(P_5\) exclusion threshold.
Section~\ref{sec:R2} proves the transferred coefficient bound and the
improved \(R_2\) estimate.  Section~\ref{sec:sieve} closes the sieve.

\section{Notation and Zhang--Zhong estimates}\label{sec:zz-inputs}

Let $g$ be a nonnegative smooth function supported in $[1,2]$, and define its Mellin transform by
\[
  \wt g(s)=\int_0^\infty g(x)x^{s-1}\,dx,
  \qquad \wt g(1)>0.
\]
Let
\[
  \Pi_\eps=\prod_{p<X^\eps}p,
\]
where $\eps>0$ is fixed and sufficiently small.  We use the Selberg sieve weights
\[
  \lambda_d=\mu(d)F\left(\frac{\log(\sqrt D/d)}{\log\sqrt D}\right),
  \qquad D=X^{1/2-\eps},
\]
with the convention \(\lambda_d=0\) for \(d>\sqrt D\).  On \([0,1]\) we take
\[
  F(x)=x^4(a_0+a_1x+a_2x^2+a_3x^3+a_4x^4),
\]
and extend \(F\) by zero outside \([0,1]\).  The polynomial is normalized as in
Zhang--Zhong so that
\[
  F(1)=1,\qquad |F(x)|\le1\quad(0\le x\le1).
\]
Consequently
\[
  \lambda_1=1,\qquad |\lambda_d|\le1\quad(d\ge1).
\]
The optimized coefficients are approximately
\[
  (a_0,a_1,a_2,a_3,a_4)
  \approx\left(
  \frac{100396}{53901},
  -\frac{17284}{13475},
  \frac{76486}{134753},
  -\frac{33241}{188654},
  \frac{10836}{377308}
  \right),
\]
following \cite[Section~2.2 and p.~13]{ZZv2}.

For square-free $n$, define
\begin{equation*}
  \calB(n)=\{d\mid n:\ \om(d)=3,\ \om(n/d)\ge3,\ d\le n^{1/2}\}.
\end{equation*}
For fixed $\alpha,\beta>0$, put
\begin{equation}
\label{eq:tau-alpha-beta}
  \tau(n;\alpha,\beta)=\sum_{d\in\calB(n)}\alpha^{\om(d)}\beta^{\om(n/d)}.
\end{equation}
Thus each $d\in\calB(n)$ contributes $\alpha^3\beta^{\om(n)-3}$.  We also write
\[
  N(n)=\#\calB(n).
\]

Following Zhang--Zhong \cite[Equation (9)]{ZZv2}, after fixing the Selberg polynomial $F$ above, we write
$\alpha_0$ for the fixed value of the first divisor-weight parameter used in
their numerical $R_2$ computation with $\beta=1$.  We keep this normalization
and do not re-optimize $\alpha_0$.  Since every $d\in\calB(n)$ has
$\om(d)=3$, all constants involving this parameter are measured on the common
scale $\alpha_0^3$.  In particular, the constants $6.27044\alpha_0^3$ and
$3.13522\alpha_0^3$, the exclusion threshold $10\alpha_0^3$, and the cutoff
$\rho$ are compared only through the ratio $\rho/\alpha_0^3$.

We record three results from the Zhang--Zhong framework in the exact form needed here.  The second proposition is written with an arbitrary fixed value of $\beta$ because the proof in \cite[Section~4, especially equations~(6)--(8)]{ZZv2} carries $\beta$ as a fixed positive parameter until the final numerical specialization $\beta=1$ in their equation~(9).

\begin{proposition}[Zhang--Zhong lower bound for the sign-detecting term]\label{prop:ZZ-R1}
For each $\sigma\in\{+1,-1\}$, define
\[
  R_1^\sigma(X)=\sum_{(n,\Pi_\eps)=1}g\left(\frac nX\right)\mu^2(n)
  \bigl(\abs{\Kl(1;n)}+\sigma\Kl(1;n)\bigr)
  \left(\sum_{d\mid n}\lambda_d\right)^2.
\]
Then
\[
  R_1^\sigma(X)
  \ge (1+o(1))\,0.76235\,\wt g(1)\frac X{\log X}.
\]
\end{proposition}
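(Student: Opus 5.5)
This proposition is imported from Zhang--Zhong rather than proved afresh, so the plan is to show that their argument applies verbatim to the sum $R_1^\sigma(X)$ with our choice of $g$, $\Pi_\eps$, $D$ and $F$. I would then re-derive the structure of the lower bound enough to confirm that the constant $0.76235$ is the one that our normalization produces.

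The first step is to bound the weight from below pointwise. Since $\abs{\Kl(1;n)}+\sigma\Kl(1;n)\ge 0$ and $\abs{\Kl}\ge \Kl^2/\tau(n)$ (Deligne gives $\abs{\Kl(1;n)}\le \tau(n)=2^{\om(n)}$ for square-free $n$), we get
\[
\abs{\Kl(1;n)}+\sigma\Kl(1;n)\ge 2^{-\om(n)}\Kl(1;n)^2+\sigma\Kl(1;n).
\]
Here the coprimality with $\Pi_\eps$ forces $\om(n)\le 1/\eps$, so $2^{-\om(n)}$ is a bounded-below, finitely-valued weight. Following Fouvry--Michel, one can decompose according to $\om(n)=k$, or equivalently keep $2^{-\om(n)}$ as a multiplicative coefficient.

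The second step expands $\bigl(\sum_{d\mid n}\lambda_d\bigr)^2$ and treats the two resulting sums separately.

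\textbf{The $\sigma$-linear term.} We need $\sum_{n\equiv 0\,(d)}g(n/X)\mu^2(n)\Kl(1;n)$ to be small on average over $d\le D$. This follows from the equidistribution/cancellation of Kloosterman sums in arithmetic progressions, via the Kuznetsov formula and the spectral large sieve in the form used by Fouvry--Michel, Sivak-Fischler and Drappeau--Maynard. The level $D=X^{1/2-\eps}$ is exactly what these bounds permit, so this term contributes $o(X/\log X)$.

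\textbf{The quadratic term.} We evaluate $\sum 2^{-\om(n)}\Kl(1;n)^2(\cdots)$ by the twisted multiplicativity $\Kl(1;p_1p_2)=\Kl(\overline{p_2}^2;p_1)\Kl(\overline{p_1}^2;p_2)$ together with the vertical Sato--Tate law, which gives an average of $\Kl^2$ equal to $1$ at primes. This reduces the quadratic term to a sieve main term of the form
\[
\wt g(1)\frac{X}{\log X}\,\mathcal{I}(F),
\]
where $\mathcal{I}$ is an explicit integral functional of $F$ and $F'$ over the simplex coming from the $\om(n)=k$ decomposition.

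The last step substitutes the optimized quartic-times-$x^4$ polynomial $F$ and evaluates $\mathcal{I}(F)$ numerically, checking that the result is $\ge 0.76235$ after rounding down.

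The main obstacle is the cancellation for the $\sigma\Kl$ term at level $X^{1/2-\eps}$ restricted to rough square-free moduli. Here I would invoke Zhang--Zhong's Proposition~2.1 directly rather than reprove it, and only check that our $\lambda_d$ (with $\lambda_d=0$ for $d>\sqrt D$ and $\abs{\lambda_d}\le1$) satisfies their hypotheses.
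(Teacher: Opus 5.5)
Your proposal ends where the paper's proof does. The statement is Zhang--Zhong's Proposition~2.1 with their $g$, $\Pi_\eps$, $D=X^{1/2-\eps}$ and polynomial $F$ taken over unchanged, and the paper simply cites it (also pointing to their Section~5, where integers with small prime factors are shown to contribute negligibly once $\eps$ is small), so your hypothesis check is all that is needed. Do not treat your reconstruction as a check on $0.76235$, however: vertical Sato--Tate averages $\Kl(a;p)$ over $a$ with $p$ fixed, so it cannot evaluate $\Kl(1;n)^2$ on prime factors that vary horizontally (e.g.\ $n$ prime, where, as far as I know, no asymptotic for $\sum_p\Kl(1;p)^2$ is available), and your sketch of the linear term drops the condition $(n,\Pi_\eps)=1$; consequently the functional $\mathcal{I}(F)$ you describe would not reproduce Zhang--Zhong's constant.
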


\begin{proof}
This is \cite[Proposition~2.1]{ZZv2}; see also the proof in \cite[Section~5]{ZZv2}, where the contribution from integers with small prime factors is shown to be negligible after $\eps$ is chosen sufficiently small.
\end{proof}

\begin{proposition}[Zhang--Zhong \(R_2\)-transference]
\label{prop:ZZ-transfer}
For fixed \(\alpha,\beta>0\), define
\begin{equation}
\label{eq:R2-alpha-beta}
  R_2(X;\alpha,\beta)
  =
  \sum_{(n,\Pi_\eps)=1}
  g\left(\frac nX\right)\mu^2(n)\abs{\Kl(1;n)}
  \tau(n;\alpha,\beta)
  \left(\sum_{r\mid n}\lambda_r\right)^2 .
\end{equation}
Then
\begin{equation}
\label{eq:ZZ-transfer}
\begin{split}
  R_2(X;\alpha,\beta)
  &\le
  (1+o(1))
  \sum_n
  g\left(\frac nX\right)\mu^2(n)
  \tau\left(n;\frac{8\alpha}{3\pi},2\beta\right)
  \left(\sum_{r\mid n}\lambda_r\right)^2 \\
  &\quad +o\left(\frac X{\log X}\right).
\end{split}
\end{equation}
Here, \(\alpha\) and \(\beta\) are fixed while \(X\to\infty\); in particular, this estimate applies to the finitely many fixed parameter pairs used below.
\end{proposition}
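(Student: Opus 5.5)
The transference in Proposition~\ref{prop:ZZ-transfer} is essentially the argument of \cite[Section~4]{ZZv2}. I would reproduce it while tracking $\alpha$ and $\beta$ as fixed parameters. Expanding $\tau(n;\alpha,\beta)$ via \eqref{eq:tau-alpha-beta}, I write $n=dm$ with $d\in\calB(n)$, $\om(d)=3$, $\om(m)\ge3$ and $d\le n^{1/2}$. Since $n$ is square-free, the twisted multiplicativity of Kloosterman sums gives
\[
  \Kl(1;dm)=\Kl(\overline{m}^2;d)\,\Kl(\overline{d}^2;m).
\]
I then bound the two factors separately, pointwise or on average, and keep the Selberg factor $(\sum_{r\mid n}\lambda_r)^2$ intact, since it is nonnegative.

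For the $d$-part I would use the Sato--Tate-type average of $|\Kl|$ over primes, in the form used by Zhang--Zhong. On average over the residues $\overline m^2 \bmod p$, equidistribution of Kloosterman angles (Katz) gives
\[
  \mathbb{E}\,|2\cos\theta| = \frac{2}{\pi}\int_0^\pi |2\cos\theta|\sin^2\theta\,d\theta = \frac{8}{3\pi}.
\]
Each of the three prime factors of $d$ therefore contributes $8/(3\pi)$, which turns $\alpha^{\om(d)}$ into $(8\alpha/(3\pi))^{\om(d)}$. The condition $d\le n^{1/2}$ makes $d$ the small variable. Consequently the sum over $m$, with $d$ fixed, is long enough for the equidistribution in $m$ (via the Deligne--Katz bounds for sums of products of Kloosterman sheaves over the large variable, together with the level-of-distribution input used in \cite{ZZv2}) to hold with error $o(X/\log X)$.

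For the $m$-part I would use Weil's bound $|\Kl(a;p)|\le2$ at each prime, giving $|\Kl(\overline d^2;m)|\le 2^{\om(m)}$. This turns $\beta^{\om(m)}$ into $(2\beta)^{\om(m)}$. Because $\beta$ enters only through this pointwise factor, every step of \cite[eqs.~(6)--(8)]{ZZv2} goes through for arbitrary fixed $\beta>0$. The coprimality to $\Pi_\eps$ bounds $\om(n)\ll 1/\eps$, which keeps all implied constants uniform.

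The main obstacle is the $d$-averaging step. There one must check that replacing $|\Kl|$ on the small factor by its Sato--Tate mean survives the twist by the Selberg weights $(\sum\lambda_r)^2$ with $D=X^{1/2-\eps}$. This needs a Bombieri--Vinogradov-type estimate for $|\Kl(\overline m^2;d)|$ in progressions. I would first try to quote the corresponding estimate of \cite[Section~4]{ZZv2} verbatim, noting that it is linear in the weight $\alpha^{3}\beta^{\om(m)}$ and hence insensitive to the values of the fixed parameters.
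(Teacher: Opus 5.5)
Your mechanism (twisted multiplicativity, Weil's bound $2^{\om(n/d)}$ on the cofactor, the Sato--Tate mean $8/(3\pi)$ on each of the three primes of $d$) is the one behind the cited estimate. Like the paper, you defer the analytic averaging to \cite[Section~4]{ZZv2}. However, you omit the one structural step the paper's proof actually records. The paper splits $\tau=\tau_<+\tau_>$ according to whether $d\le n^{1/2}\exp(-\sqrt{\log X})$. It disposes of the near-boundary piece by the separate bound $R_{22}\ll X(\log X)^{-5/4}$ from \cite[Section~4, equation~(6)]{ZZv2}. Only then does it apply the transference to $\tau(n;8\alpha/(3\pi),2\beta)$, and only on $R_{21}$. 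The additive $o(X/\log X)$ in \eqref{eq:ZZ-transfer} is precisely what absorbs $R_{22}$.

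This matters because your justification for the averaging step does not cover the whole range. You argue that $d\le n^{1/2}$ makes $d$ the small variable, so the $m$-sum is long enough for equidistribution modulo $d$ with error $o(X/\log X)$. That reasoning degenerates at the boundary, where $d\asymp m\asymp n^{1/2}$. There the Zhang--Zhong averaging estimate is not asserted, and the paper deliberately avoids claiming it. As written, your plan to quote \cite[Section~4]{ZZv2} verbatim would apply that estimate outside its stated range. The fix is to cut off the strip $n^{1/2}e^{-\sqrt{\log X}}<d\le n^{1/2}$ first and bound it as in \cite[equation~(6)]{ZZv2}. Then run your Weil/Sato--Tate argument only for $d\le n^{1/2}e^{-\sqrt{\log X}}$, where $m/d\ge e^{2\sqrt{\log X}}$. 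With that insertion your outline coincides with the paper's proof.
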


\begin{proof}
This follows Zhang and Zhong's argument in \cite[Section~4, equations~(6)--(8)]{ZZv2}. We briefly recall the reduction.

We split the truncated divisor function according to whether the selected divisor is separated from the square-root boundary:
\[
  \tau(n;\alpha,\beta)=\tau_{<}(n;\alpha,\beta)+\tau_{>}(n;\alpha,\beta),
\]
where
\[
  \tau_{<}(n;\alpha,\beta)
  =
  \sum_{\substack{m\in\calB(n)\\
  m\le n^{1/2}\exp(-\sqrt{\log X})}}
  \alpha^{\om(m)}\beta^{\om(n/m)}
\]
and
\[
  \tau_{>}(n;\alpha,\beta)
  =
  \sum_{\substack{m\in\calB(n)\\
  n^{1/2}\exp(-\sqrt{\log X})<m\le n^{1/2}}}
  \alpha^{\om(m)}\beta^{\om(n/m)}.
\]
Let \(R_{21}(X;\alpha,\beta)\) and \(R_{22}(X;\alpha,\beta)\) denote the corresponding contributions to \(R_2(X;\alpha,\beta)\). Thus, following the notation in \cite[Section~4]{ZZv2}, we have
\begin{equation}\label{eq:R2-split-ZZ}
  R_2(X;\alpha,\beta)
  =
  R_{21}(X;\alpha,\beta)+R_{22}(X;\alpha,\beta).
\end{equation}

As shown in \cite[Section~4, equation~(6)]{ZZv2}, the term \(R_{22}\) is negligible:
\begin{equation}\label{eq:R22-negligible}
  R_{22}(X;\alpha,\beta)
  \ll_{\alpha,\beta} X(\log X)^{-5/4}
  =
  o\left(\frac X{\log X}\right).
\end{equation}

Furthermore, the estimate in
\cite[Section~4, p.~13]{ZZv2} gives
\begin{equation}\label{eq:R21-transfer}
  R_{21}(X;\alpha,\beta)
  \le
  (1+o(1))
  \sum_n
  g\left(\frac nX\right)\mu^2(n)
  \tau\left(n;\frac{8\alpha}{3\pi},2\beta\right)
  \left(\sum_{r\mid n}\lambda_r\right)^2 .
\end{equation}

Finally, combining \eqref{eq:R2-split-ZZ}, \eqref{eq:R22-negligible}, and \eqref{eq:R21-transfer} yields \eqref{eq:ZZ-transfer}.
\end{proof}

\begin{proposition}[Zhang--Zhong Selberg main term]\label{prop:ZZ-Selberg}
For the polynomial \(F\) above,
\[
\begin{aligned}
  &\frac5{16}\left(\frac{4\alpha_0}{3\pi}\right)^3
  \sum_{\om(n)\ge6}
  g\left(\frac nX\right)\mu^2(n)4^{\om(n)}
  \left(\sum_{d\mid n}\lambda_d\right)^2  \\
  &\qquad\le
  (1+o(1))\,6.27044\,\alpha_0^3\,\wt g(1)\frac X{\log X}.
\end{aligned}
\]
Equivalently, replacing the coefficient \(5/16\) by \(5/32\) replaces the
constant \(6.27044\) by \(3.13522\).
\end{proposition}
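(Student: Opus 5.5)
The estimate is essentially a restatement of the numerical Selberg computation in the proof of \cite[Proposition~2.2]{ZZv2}. The plan is to identify the left-hand side with the majorant appearing there and then import their evaluation.

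First, I would show how this majorant arises. In Zhang--Zhong the transferred sum from Proposition~\ref{prop:ZZ-transfer}, taken with parameters $(\alpha_0,1)$, is controlled pointwise on square-free $n$ with $\om(n)\ge6$. Since each $d\in\calB(n)$ contributes $(8\alpha_0/(3\pi))^3 2^{\om(n)-3}$, we have
\[
  \tau\Bigl(n;\frac{8\alpha_0}{3\pi},2\Bigr)
  =\Bigl(\frac{4\alpha_0}{3\pi}\Bigr)^3 2^{\om(n)}N(n).
\]
The count $N(n)$ is at most roughly half of $\binom{\om(n)}{3}$, because of the condition $d\le n^{1/2}$. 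Together with $\binom{k}{3}\le c\,2^{k}$, this is what produces a bound of the shape $\frac5{16}(4\alpha_0/(3\pi))^3 4^{\om(n)}$. I would not re-derive this bound here; it only motivates the form of the left-hand side.

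Second, I would evaluate
\[
  \sum_{\om(n)\ge6} g(n/X)\mu^2(n)4^{\om(n)}\Bigl(\sum_{d\mid n}\lambda_d\Bigr)^2
\]
in the standard way. Drop the restriction $\om(n)\ge6$ only where this is harmless (everything is nonnegative, so dropping it gives an upper bound). Expand the square and swap sums. Then use the Mellin transform of $g$ and the Dirichlet series $\sum\mu^2(n)4^{\om(n)}n^{-s}\approx\zeta(s)^4H(s)$. This gives a main term $\wt g(1)X(\log X)^3$ times a quadratic form in the $\lambda_d$. That form is of size $(\log\sqrt D)^{-4}$ times an explicit integral of derivatives of $F$, with error terms absorbed by the level $D=X^{1/2-\eps}$. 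Zhang--Zhong keep the restriction $\om(n)\ge6$ more carefully via a contour or combinatorial truncation, and that truncation is what yields the $1/\log X$ saving. For the numerical value I would simply quote their computation with the stated coefficients $a_i$, which gives $6.27044\alpha_0^3$ after multiplying by $\frac5{16}(4/(3\pi))^3$.

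The main obstacle is matching normalizations exactly: the coefficient $5/16$, the scale $\alpha_0^3$, and the restriction $\om(n)\ge6$ must agree with the constant quoted from \cite[p.~13]{ZZv2}. I would check this by recomputing the integral in $F$ symbolically.

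The final clause is immediate. The left-hand side is linear in the coefficient, and the right-hand constant is obtained by multiplying the same sum by it. Halving $5/16$ to $5/32$ therefore halves $6.27044$ to $3.13522$, with the $(1+o(1))$ factor unchanged.
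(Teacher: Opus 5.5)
Your proposal does what the paper does. The paper's proof is only a citation of Zhang--Zhong's equations (8)--(9) for the $5/16$ bound, and the $5/32$ clause is linearity in the coefficient, as in your last paragraph. So the part your argument actually rests on is fine. Two of your explanatory remarks, however, are false and should be removed.

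First, $N(n)$ is not ``roughly half of $\binom{\omega(n)}{3}$'' in general. The halving comes from the pairing $d\leftrightarrow n/d$, which exists only when $\omega(n)=6$. For $\omega(n)=7$ with seven primes of comparable size, every product of three of them is about $n^{3/7}<n^{1/2}$, so $N(n)=35=\binom73$.

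The coefficient $5/16$ comes from the trivial bound $N(n)\le\binom r3$ combined with $\max_{r\ge6}\binom r3 2^{-r}=\binom63 2^{-6}=5/16$. If your halving held for every $r$, Zhang--Zhong's own weight $\tau(n;\alpha_0,1)$ would already give $5/32$, and the geometric half-weight would be pointless. In fact, with the full weight the $r=7$ term alone contributes $35/128>5/32$. This is exactly why the paper halves the weight for $\omega(n/d)\ge4$, and why the restriction $t\le1/7$ appears in Lemma~\ref{lem:five32}.

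Second, the restriction $\omega(n)\ge6$ is not the source of the $1/\log X$ saving. Your own heuristic shows this: the unrestricted sum has size $X(\log X)^3(\log\sqrt D)^{-4}\asymp X/\log X$. The saving comes from $F$ vanishing to order $4$ at $0$, matching the dimension $4$ of the weight $4^{\omega(n)}$. Dropping $\omega(n)\ge6$ by positivity can therefore affect only the constant, never the power of $\log X$.
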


\begin{proof}
This is the estimate obtained from equations~(8)--(9) in Zhang--Zhong's proof
of Proposition~2.2; see \cite[pp.~13--14]{ZZv2}.
\end{proof}

\section{The geometric half-weight and the \texorpdfstring{$P_5$}{P5} threshold}\label{sec:weight}

We first record the elementary combinatorial input on the divisor set
\(\calB(n)\).  Recall that
\[
  N(n)=\#\calB(n).
\]

\begin{lemma}[Zhang--Zhong combinatorial lemma]
\label{lem:comb}
Let \(n\) be square-free. Then
\[
  N(n)=10 \qquad(\om(n)=6),
\]
and
\[
  N(n)\ge20 \qquad(\om(n)\ge7).
\]
\end{lemma}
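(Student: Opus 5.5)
The plan is to reduce the count to a statement about $3$-element subsets of the prime factors of $n$. For the case $\om(n)\ge7$ I will use an intersecting-family (Erd\H{o}s--Ko--Rado) argument.

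Write $n=p_1\cdots p_k$ with distinct primes and $k=\om(n)\ge6$. A divisor $d$ with $\om(d)=3$ corresponds to a $3$-subset $A\subset\{1,\dots,k\}$, with $d=d_A:=\prod_{i\in A}p_i$. Since $k\ge6$, we have $\om(n/d)=k-3\ge3$ automatically, so that condition is vacuous. Hence $N(n)$ is the number of $3$-subsets $A$ with $d_A^2\le n$. Because $n$ is square-free and $d_A>1$, equality $d_A^2=n$ is impossible. So each $3$-subset is either \emph{small} ($d_A<\sqrt n$) or \emph{large} ($d_A>\sqrt n$).

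\emph{Case $k=6$.} The complement $A^c$ of a $3$-subset is again a $3$-subset, and $d_A d_{A^c}=n$. Hence exactly one of $A$, $A^c$ is small. The $\binom63=20$ triples split into $10$ complementary pairs, each contributing exactly one element of $\calB(n)$, so $N(n)=10$.

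\emph{Case $k\ge7$.} The key observation is that two large triples cannot be disjoint. If $A\cap B=\emptyset$, then $d_Ad_B$ divides $n$, so $d_Ad_B\le n$. But if both were large we would have $d_Ad_B>\sqrt n\cdot\sqrt n=n$, a contradiction. Thus the large triples form an intersecting family of $3$-subsets of a $k$-set with $k\ge 2\cdot3$. By the Erd\H{o}s--Ko--Rado theorem there are at most $\binom{k-1}{2}$ of them. Therefore
\[
  N(n)\ \ge\ \binom k3-\binom{k-1}2=\binom{k-1}3\ \ge\ \binom63=20 .
\]

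The only genuinely non-elementary input is the Erd\H{o}s--Ko--Rado bound. Of the steps, only the case $k=7$ is tight for the target $20$, since $\binom{k-1}3\ge35$ once $k\ge8$. If one prefers to avoid citing EKR, for $k=7$ one could check directly that an intersecting family of triples in a $7$-set has size at most $15$. That uses the standard Katona cyclic-permutation argument: at most $3$ of the $7$ cyclic arcs of length $3$ are pairwise intersecting, and averaging gives $|\mathcal F|\le \tfrac37\binom73=15$. I expect this verification, rather than the pairing for $k=6$, to be the main point requiring care.
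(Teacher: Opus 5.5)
Your proof is correct, but it takes a different route from the paper's. The paper does not reprove the cases $\omega(n)=6$ and $\omega(n)=7$; it cites both from Zhang--Zhong's Lemma~A.1. It then handles $\omega(n)\ge8$ by a reduction: pick seven of the prime factors and let $m$ be their product. Each of the at least $20$ elements $d\in\calB(m)$ satisfies $d\le m^{1/2}\le n^{1/2}$ and $\omega(n/d)\ge5$, so it lies in $\calB(n)$.

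You instead prove everything from scratch. For $k=6$ you use the pairing $A\leftrightarrow A^c$. For $k\ge7$ you observe that the triples with $d_A>\sqrt n$ form an intersecting family, because disjoint $A,B$ give $d_Ad_B\mid n$. Erd\H{o}s--Ko--Rado then caps that family at $\binom{k-1}{2}$.

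What each route buys:
\begin{itemize}
\item Your route is self-contained. It explains where the number $20=\binom73-\binom62$ comes from, and it is sharp. If $p_1>\prod_{i\ge2}p_i$, the large triples are exactly those containing $p_1$, so $N(n)=20$.
\item Your route also gives the stronger bound $N(n)\ge\binom{k-1}{3}$ for every $k\ge7$.
\item The paper's route is shorter and needs no extremal set theory, but it relies on the external lemma for the two base cases.
\end{itemize}

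The two approaches also combine well. Your Katona cyclic-arc computation for $k=7$, together with the paper's sub-divisor reduction for $k\ge8$, gives a fully elementary proof that never invokes the general Erd\H{o}s--Ko--Rado theorem.
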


\begin{proof}
The cases \(\om(n)=6\) and \(\om(n)=7\) are exactly
\cite[Lemma~A.1]{ZZv2}.  If \(\om(n)\ge8\), choose seven prime factors of
\(n\) and let \(m\) be their product.  Applying the \(\om=7\) case to \(m\)
gives at least \(20\) divisors \(d\mid m\) with \(\om(d)=3\) and
\(d\le m^{1/2}\).  For each such \(d\), we have \(d\le m^{1/2}\le n^{1/2}\)
and
\[
  \om(n/d)=\om(n)-3\ge5.
\]
Thus \(d\in\calB(n)\), and hence \(N(n)\ge20\).
\end{proof}

We now introduce the auxiliary factor used in the final sieve:
\[
  \rho-\tau_{5,t}(n;\alpha_0).
\]
Here \(\rho>0\) will be chosen later, and \(\tau_{5,t}(n;\alpha_0)\) denotes
the geometric \(P_5\) penalty weight defined below.
To force all non-\(P_5\) moduli to contribute non-positively, we need a
pointwise lower bound
\[
  \tau_{5,t}(n;\alpha_0)\ge 10\alpha_0^3
  \qquad(\om(n)\ge6),
\]
and then we will choose \(\rho<10\alpha_0^3\).  At the same time, the new
weight must be compatible with Zhang--Zhong's transference estimate, which is
available for the two-parameter weights \(\tau(n;\alpha,\beta)\) defined in
\eqref{eq:tau-alpha-beta}.

Lemma~\ref{lem:comb} suggests the following ideal divisor profile.  An
admissible divisor \(d\in\calB(n)\) should receive full weight when
\(\om(n/d)=3\), and half weight when \(\om(n/d)\ge4\).  Indeed, if
\(\om(n)=6\), then every \(d\in\calB(n)\) has \(\om(n/d)=3\), and
Lemma~\ref{lem:comb} gives exactly \(10\) admissible divisors.  Full weight on
each of them gives the threshold \(10\alpha^3\).  If \(\om(n)\ge7\), then every
\(d\in\calB(n)\) has \(\om(n/d)\ge4\), and Lemma~\ref{lem:comb} gives at least
\(20\) admissible divisors.  Half weight per divisor is therefore already enough
to give the same threshold \(10\alpha^3\).

This discontinuous ideal profile is not directly expressible in terms of the
Zhang--Zhong two-parameter weights.  We therefore use a geometric
substitute.  Fix
\[
  0<t\le\frac17;
\]
the concrete choice \(t=1/8\) is convenient.  For integers \(s\ge3\), define
\[
  h_t(s)=\frac12+\frac12t^{s-3}.
\]
Then
\[
  h_t(3)=1,
  \qquad
  h_t(s)\ge\frac12 \quad(s\ge4).
\]
The corresponding \(P_5\) penalty weight is
\begin{equation}\label{eq:tau5-def}
  \tau_{5,t}(n;\alpha)
  =
  \alpha^3\sum_{d\in\calB(n)}h_t(\om(n/d)).
\end{equation}

The reason for using the geometric profile is the following exact identity:
\begin{equation}
\label{eq:linear-combination}
  \tau_{5,t}(n;\alpha)
  =
  \frac12\tau(n;\alpha,1)
  +
  \frac{1}{2t^3}\tau(n;\alpha,t).
\end{equation}
Indeed, since every \(d\in\calB(n)\) has \(\om(d)=3\), the right-hand side is
\[
  \alpha^3
  \sum_{d\in\calB(n)}
  \left(\frac12+\frac12t^{\om(n/d)-3}\right),
\]
which is exactly \eqref{eq:tau5-def}.  Thus the new penalty is a positive
linear combination of two standard Zhang--Zhong weights, so the transference
Proposition \ref{prop:ZZ-transfer} can be applied separately with \(\beta=1\) and \(\beta=t\).
The restriction \(t\le1/7\) will be used later, in Lemma~\ref{lem:five32}, to
obtain the transferred coefficient \(5/32\).

\begin{lemma}[The \(P_5\) threshold]\label{lem:threshold}
For every square-free integer \(n\), we have
\[
  \begin{cases}
    \tau_{5,t}(n;\alpha) \ge 10\alpha^3 & \text{if } \om(n) \ge 6, \\
    \tau_{5,t}(n;\alpha) = 0            & \text{if } \om(n) \le 5.
  \end{cases}
\]
\end{lemma}

\begin{proof}
If \(\om(n)\le5\), then \(\calB(n)=\varnothing\).  Hence \(\tau_{5,t}(n;\alpha)=0\).

Now suppose \(\om(n)\ge6\).  If \(\om(n)=6\), then \(\om(n/d)=3\) for every
\(d\in\calB(n)\).  Since \(h_t(3)=1\), Lemma~\ref{lem:comb} gives
\[
  \tau_{5,t}(n;\alpha)
  =
  \alpha^3N(n)
  =
  10\alpha^3.
\]
If \(\om(n)\ge7\), then \(\om(n/d)\ge4\) for every \(d\in\calB(n)\).
Consequently, \(h_t(\om(n/d))\ge1/2\).  Again by Lemma~\ref{lem:comb},
\[
  \tau_{5,t}(n;\alpha)
  \ge
  \frac12\alpha^3N(n)
  \ge
  10\alpha^3.
\]
This proves the threshold.
\end{proof}

\section{The transferred coefficient and the new \texorpdfstring{$R_2$}{R2} estimate}\label{sec:R2}

Define
\begin{equation*}
  R_{2,t}(X)
  =
  \sum_{(n,\Pi_\eps)=1}
  g\left(\frac nX\right)\mu^2(n)\abs{\Kl(1;n)}
  \tau_{5,t}(n;\alpha_0)
  \left(\sum_{d\mid n}\lambda_d\right)^2,
\end{equation*}
Here we use a different \(P_5\) penalty weight from \eqref{eq:R2-alpha-beta}.
The goal of this section is to show that the geometric half-weight reduces by
a factor of two the Zhang--Zhong $R_2$ constant in
\cite[Proposition~2.2]{ZZv2}.

To estimate \(R_{2,t}(X)\), we do not introduce a new sieve weight.  We keep
\(\tau_{5,t}(n;\alpha_0)\) in the final sieve, but use Zhang--Zhong's
transference proposition to majorize the \(R_2\)-sum.  Since
\[
  \tau_{5,t}(n;\alpha)
  =
  \frac12\tau(n;\alpha,1)
  +
  \frac{1}{2t^3}\tau(n;\alpha,t),
\]
Proposition~\ref{prop:ZZ-transfer} may be applied separately to the two
positive terms.  At the level of the resulting Selberg majorant, each standard
weight undergoes the replacement
\[
  \tau(n;\alpha,\beta)
  \longmapsto
  \tau\left(n;\frac{8\alpha}{3\pi},2\beta\right).
\]
Thus the transferred coefficient which has to be bounded pointwise is
\[
  \frac12
  \tau\left(n;\frac{8\alpha}{3\pi},2\right)
  +
  \frac{1}{2t^3}
  \tau\left(n;\frac{8\alpha}{3\pi},2t\right).
\]

\begin{lemma}[Transferred coefficient bound]
\label{lem:five32}
Let \(0<t\le1/7\) and \(\alpha>0\).  For every square-free integer \(n\),
\begin{equation}
\label{eq:transferred-pointwise-bound}
\begin{aligned}
  &\frac12
  \tau\left(n;\frac{8\alpha}{3\pi},2\right)
  +
  \frac{1}{2t^3}
  \tau\left(n;\frac{8\alpha}{3\pi},2t\right)  \\
  &\qquad\le
  \frac5{32}
  \left(\frac{4\alpha}{3\pi}\right)^3
  4^{\om(n)}.
\end{aligned}
\end{equation}
\end{lemma}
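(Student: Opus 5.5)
The plan is to evaluate the left-hand side of \eqref{eq:transferred-pointwise-bound} divisor by divisor, using that every \(d\in\calB(n)\) has \(\om(d)=3\), and then compare with the right-hand side case by case in \(k=\om(n)\).

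\textbf{Reduction to a one-variable inequality.} Write \(k=\om(n)\) and \(s=\om(n/d)=k-3\), which is the same for all \(d\in\calB(n)\). By \eqref{eq:tau-alpha-beta}, each \(d\in\calB(n)\) contributes
\[
  \left(\frac{8\alpha}{3\pi}\right)^3\left(\frac12\,2^{s}+\frac1{2t^3}(2t)^{s}\right)
  =8\left(\frac{4\alpha}{3\pi}\right)^3 2^{s-1}\bigl(1+t^{s-3}\bigr).
\]
This equals \(\bigl(\tfrac{4\alpha}{3\pi}\bigr)^3 2^{k}\bigl(1+t^{k-6}\bigr)\). Hence the left-hand side equals
\[
  \left(\frac{4\alpha}{3\pi}\right)^3 N(n)\,2^{k}\bigl(1+t^{k-6}\bigr).
\]
So \eqref{eq:transferred-pointwise-bound} is equivalent to
\[
  N(n)\bigl(1+t^{k-6}\bigr)\le\frac5{32}\,2^{k}.
\]
If \(k\le5\), then \(\calB(n)=\varnothing\), so the left-hand side vanishes and there is nothing to prove.

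\textbf{Small cases.} For \(k=6\), Lemma~\ref{lem:comb} gives \(N(n)=10\). The inequality becomes \(10\cdot 2\le \frac5{32}\cdot 64=20\), which holds with equality. For \(k=7\), I use the trivial bound \(N(n)\le\binom73=35\). The requirement is then \(35(1+t)\le\frac5{32}\cdot128=40\), which is exactly \(t\le 1/7\). This is the step where the hypothesis on \(t\) is used, and it is the tightest point of the argument besides the equality at \(k=6\).

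\textbf{Large cases.} For \(k\ge8\), I use \(N(n)\le\binom k3\) and \(1+t^{k-6}\le 1+1/49\). It then suffices to show \(\binom k3\cdot\frac{50}{49}\le\frac5{32}2^k\). At \(k=8\) this reads \(56\cdot\frac{50}{49}\approx57.2\le 40\cdot 2=80\). For the induction, the ratio \(\binom{k+1}3/\binom k3=\frac{k+1}{k-2}\) is at most \(2\) for \(k\ge5\), while the right-hand side doubles at each step, so the inequality propagates to all \(k\ge8\).

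\textbf{Main obstacle.} The only delicate points are the equality case \(k=6\), which needs the exact count \(N(n)=10\) from Lemma~\ref{lem:comb}, and the case \(k=7\), where the crude bound \(\binom73\) must be balanced against \(t\le1/7\). Everything else is routine.
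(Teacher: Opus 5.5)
Your route is the paper's: evaluate the left-hand side divisor by divisor using \(\om(d)=3\), use the exact count \(N(n)=10\) at \(\om(n)=6\), use \(N(n)\le\binom{k}{3}\) together with \(t\le1/7\) at \(\om(n)=7\), and use monotonicity beyond. Your doubling induction with \(1+t^{k-6}\le 50/49\) is a harmless variant of the paper's remark that \(\binom{r}{3}/2^{r+1}\) and \(1+t^{r-6}\) are non-increasing. However, the reduction step as you wrote it is off by a factor of two. Since \(8\cdot 2^{s-1}=2^{s+2}=2^{k-1}\), each \(d\in\calB(n)\) contributes \(\bigl(\tfrac{4\alpha}{3\pi}\bigr)^3 2^{k-1}\bigl(1+t^{k-6}\bigr)\), not \(\bigl(\tfrac{4\alpha}{3\pi}\bigr)^3 2^{k}\bigl(1+t^{k-6}\bigr)\). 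So the correct equivalent form of the lemma is
\[
  N(n)\bigl(1+t^{k-6}\bigr)\le\frac5{16}\,2^{k},
\]
which is the paper's \(N(n)(1+t^{r-6})/2^{r+1}\le 5/32\). The inequality you actually state, \(N(n)(1+t^{k-6})\le\frac5{32}2^k\), is false for every \(n\) with \(\om(n)=6\), since it reads \(20\le10\). Indeed, if your per-divisor formula were right, the lemma itself would fail at \(\om(n)=6\).

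You do not detect this because each of your numerical evaluations is also doubled. In fact \(\frac5{32}\cdot64=10\), not \(20\); \(\frac5{32}\cdot128=20\), not \(40\); and \(\frac5{32}\cdot2^8=40\), not \(80\). The doubled values are exactly \(\frac5{16}2^k\), so the two slips cancel. To repair the argument, replace \(2^k\) by \(2^{k-1}\) in the per-divisor contribution and \(\frac5{32}2^k\) by \(\frac5{16}2^k\) in the target. Then every check you wrote is correct verbatim: \(20\le20\), \(35(1+t)\le40\iff t\le1/7\), \(56\cdot\frac{50}{49}\le80\), and the doubling step \(\frac{k+1}{k-2}\le2\). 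With that change the proof is complete and agrees with the paper's. As submitted, though, the argument asserts a false equivalence and then verifies it with incorrect arithmetic, so the correction is needed.
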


\begin{proof}
If \(\calB(n)=\varnothing\), the left-hand side is zero and there is nothing
to prove.  We may therefore assume \(\calB(n)\ne\varnothing\).  Then
\(\om(n)\ge6\).  Put
\[
  r=\om(n).
\]
For every \(d\in\calB(n)\), we have \(\om(d)=3\), and hence
\(\om(n/d)=r-3\).  Therefore the left-hand side of
\eqref{eq:transferred-pointwise-bound} is
\[
\begin{aligned}
  &\sum_{d\in\calB(n)}
  \left[
  \frac12
  \left(\frac{8\alpha}{3\pi}\right)^3
  2^{r-3}
  +
  \frac{1}{2t^3}
  \left(\frac{8\alpha}{3\pi}\right)^3
  (2t)^{r-3}
  \right]  \\
  &\qquad=
  \left(\frac{4\alpha}{3\pi}\right)^3
  4^r
  N(n)\frac{1+t^{r-6}}{2^{r+1}},
\end{aligned}
\]
where \(N(n)=\#\calB(n)\).

It remains to prove
\[
  N(n)\frac{1+t^{r-6}}{2^{r+1}}\le\frac5{32}.
\]
If \(r=6\), Lemma~\ref{lem:comb} gives \(N(n)=10\), and hence
\[
  N(n)\frac{1+t^{r-6}}{2^{r+1}}
  =
  10\frac{2}{2^7}
  =
  \frac5{32}.
\]
If \(r\ge7\), we use the trivial bound \(N(n)\le\binom r3\).  The sequence
\[
  \frac{\binom r3}{2^{r+1}}
\]
is non-increasing for \(r\ge5\), and \(1+t^{r-6}\) is non-increasing for
\(r\ge7\) when \(0<t\le1\).  Thus the maximum for \(r\ge7\) occurs at \(r=7\).
Since $t\le1/7$,
\[
  N(n)\frac{1+t^{r-6}}{2^{r+1}}
  \le
  \binom73\frac{1+t}{2^8}
  \le
  35\frac{1+1/7}{256}
  =
  \frac5{32}.
\]
This proves the lemma.
\end{proof}

\begin{proposition}[New \(R_2\) bound]\label{prop:newR2}
For every fixed \(0<t\le1/7\),
\[
  R_{2,t}(X)
  \le
  (1+o(1))\,3.13522\,\alpha_0^3\,\wt g(1)\frac X{\log X}.
\]
\end{proposition}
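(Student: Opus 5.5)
We use the identity \eqref{eq:linear-combination} with \(\alpha=\alpha_0\) to split \(R_{2,t}(X)\) into the nonnegative combination
\[
  R_{2,t}(X)=\frac12 R_2(X;\alpha_0,1)+\frac1{2t^3}R_2(X;\alpha_0,t),
\]
where \(R_2(X;\alpha,\beta)\) is as in \eqref{eq:R2-alpha-beta}. This is legitimate because \(\tau_{5,t}\) enters the definition of \(R_{2,t}\) linearly, and every other factor is the same in both sums.

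Next we apply Proposition~\ref{prop:ZZ-transfer} twice, once with \((\alpha,\beta)=(\alpha_0,1)\) and once with \((\alpha_0,t)\). Both pairs are fixed as \(X\to\infty\). The coefficients \(1/2\) and \(1/(2t^3)\) are positive constants, so the two \(o(X/\log X)\) errors and the two factors \(1+o(1)\) combine into a single such error and factor. This gives
\[
  R_{2,t}(X)\le(1+o(1))\sum_n g\left(\frac nX\right)\mu^2(n)\,T(n)\left(\sum_{r\mid n}\lambda_r\right)^2+o\left(\frac X{\log X}\right),
\]
where \(T(n)\) is the transferred coefficient on the left-hand side of \eqref{eq:transferred-pointwise-bound}, taken with \(\alpha=\alpha_0\).

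We then bound the sum pointwise. The factors \(g(n/X)\), \(\mu^2(n)\) and the squared Selberg sum are all nonnegative, so we may insert Lemma~\ref{lem:five32}, which requires \(t\le1/7\). We also note that \(T(n)=0\) unless \(\calB(n)\neq\varnothing\), and this forces \(\om(n)\ge6\). The sum can therefore be restricted to \(\om(n)\ge6\), and it is at most
\[
\frac5{32}\left(\frac{4\alpha_0}{3\pi}\right)^3\sum_{\om(n)\ge6}g\left(\frac nX\right)\mu^2(n)4^{\om(n)}\left(\sum_{d\mid n}\lambda_d\right)^2 .
\]
Since \(5/32=\tfrac12\cdot 5/16\), Proposition~\ref{prop:ZZ-Selberg} bounds this by \((1+o(1))\,\tfrac12\cdot 6.27044\,\alpha_0^3\,\wt g(1)X/\log X=(1+o(1))\,3.13522\,\alpha_0^3\,\wt g(1)X/\log X\). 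The error \(o(X/\log X)\) is absorbed into the factor \(1+o(1)\), because \(\wt g(1)>0\) and \(\alpha_0>0\) are fixed.

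The main point needing care is the restriction of the sum to \(\om(n)\ge6\) when passing to the Selberg majorant, since Proposition~\ref{prop:ZZ-Selberg} is stated only over that range. This is handled by the vanishing of \(T(n)\) whenever \(\om(n)\le5\). We must also check that the transferred sum in Proposition~\ref{prop:ZZ-transfer} carries no coprimality condition to \(\Pi_\eps\) that would clash with Proposition~\ref{prop:ZZ-Selberg}. Both statements are over all \(n\), so the two match and no further argument is needed.
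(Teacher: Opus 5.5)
Your proposal is correct and follows the paper's proof step for step. You split $R_{2,t}$ via the positive decomposition \eqref{eq:linear-combination}, apply Proposition~\ref{prop:ZZ-transfer} once with $\beta=1$ and once with $\beta=t$, use the pointwise bound of Lemma~\ref{lem:five32} with the sum restricted to $\om(n)\ge6$, and then use Proposition~\ref{prop:ZZ-Selberg} with the coefficient halved; your remarks on absorbing the $o(X/\log X)$ error and on the matching absence of the $\Pi_\eps$ condition just make explicit what the paper leaves implicit.
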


\begin{proof}
By the positive decomposition \eqref{eq:linear-combination},
\[
  \tau_{5,t}(n;\alpha_0)
  =
  \frac12\tau(n;\alpha_0,1)
  +
  \frac{1}{2t^3}\tau(n;\alpha_0,t).
\]
We may apply Proposition~\ref{prop:ZZ-transfer} separately to the two terms.
Since \(t\) is fixed, the two error terms are still \(o(X/\log X)\).  Put
\[
  T_t(n)=
  \frac12
  \tau\left(n;\frac{8\alpha_0}{3\pi},2\right)
  +
  \frac{1}{2t^3}
  \tau\left(n;\frac{8\alpha_0}{3\pi},2t\right).
\]
The transference estimate gives
\[
\begin{split}
  R_{2,t}(X)
  &\le (1+o(1))
  \sum_n g\left(\frac nX\right)\mu^2(n)T_t(n)
  \left(\sum_{d\mid n}\lambda_d\right)^2 \\
  &\quad+o\left(\frac X{\log X}\right).
\end{split}
\]
Here the restriction \((n,\Pi_\eps)=1\) has been dropped in the majorant, since
all terms are non-negative.

Applying Lemma~\ref{lem:five32} with \(\alpha=\alpha_0\), we obtain
\[
\begin{aligned}
  R_{2,t}(X)
  &\le
  (1+o(1))\frac5{32}
  \left(\frac{4\alpha_0}{3\pi}\right)^3
  \sum_{\om(n)\ge6}
  g\left(\frac nX\right)\mu^2(n)4^{\om(n)}
  \left(\sum_{d\mid n}\lambda_d\right)^2   \\
  &\qquad+
  o\left(\frac X{\log X}\right).
\end{aligned}
\]
Proposition~\ref{prop:ZZ-Selberg}, with \(5/16\) replaced by \(5/32\), gives
\[
  R_{2,t}(X)
  \le
  (1+o(1))\,3.13522\,\alpha_0^3\,\wt g(1)\frac X{\log X}.
\]
This proves the proposition.
\end{proof}

\section{Closing the sieve}\label{sec:sieve}

Fix, for instance, $t=1/8$.

For \(\sigma\in\{+1,-1\}\) and \(\rho>0\), define the signed sieve sum
\begin{equation*}
\begin{aligned}
  \mathcal S_{t,\rho}^{\sigma}(X)
  =
  \sum_{(n,\Pi_\eps)=1}
  &g\left(\frac nX\right)\mu^2(n)
  \bigl(\abs{\Kl(1;n)}+\sigma\Kl(1;n)\bigr)  \\
  &\times
  \bigl(\rho-\tau_{5,t}(n;\alpha_0)\bigr)
  \left(\sum_{d\mid n}\lambda_d\right)^2 .
\end{aligned}
\end{equation*}
The factor \(\abs{\Kl(1;n)}+\sigma\Kl(1;n)\) detects the sign \(\sigma\):
\[
  \abs{\Kl(1;n)} + \sigma\Kl(1;n)
  =
  \begin{cases}
    2\abs{\Kl(1;n)} & \text{if } \sigma\Kl(1;n) > 0, \\
    0               & \text{if } \sigma\Kl(1;n) \le 0.
  \end{cases}
\]

We first prove positivity of \(\mathcal S_{t,\rho}^{\sigma}(X)\) for a suitable
choice of \(\rho\).  Expanding only the factor
\(\rho-\tau_{5,t}(n;\alpha_0)\), we obtain
\begin{multline*}
  \mathcal S_{t,\rho}^{\sigma}(X)
  =
  \rho R_1^\sigma(X)  \\
  -
  \sum_{(n,\Pi_\eps)=1}
  g\left(\frac nX\right)\mu^2(n)
  \bigl(\abs{\Kl(1;n)}+\sigma\Kl(1;n)\bigr)
  \tau_{5,t}(n;\alpha_0)
  \left(\sum_{d\mid n}\lambda_d\right)^2 .
\end{multline*}
Since \(g\ge0\), \(\tau_{5,t}(n;\alpha_0)\ge0\), and
\[
  0\le \abs{\Kl(1;n)}+\sigma\Kl(1;n)\le2\abs{\Kl(1;n)},
\]
the second term is at most \(2R_{2,t}(X)\).  Hence
\begin{equation}
\label{eq:sieve-lower-bound}
  \mathcal S_{t,\rho}^{\sigma}(X)
  \ge
  \rho R_1^\sigma(X)-2R_{2,t}(X).
\end{equation}

The choice of $\rho$ is constrained from both sides.  Write
$\rho=c\alpha_0^3$.  By Proposition~\ref{prop:ZZ-R1},
Proposition~\ref{prop:newR2}, and \eqref{eq:sieve-lower-bound}, positivity is
obtained as soon as
\[
  0.76235c-2\cdot3.13522>0.
\]
Equivalently,
\[
  c>\frac{2\cdot3.13522}{0.76235}=8.22515\ldots .
\]
On the other hand, Lemma~\ref{lem:threshold} excludes every modulus with
$\om(n)\ge6$ only if
\[
  \rho<10\alpha_0^3,
  \qquad\text{that is,}\qquad c<10.
\]
Thus any fixed $c$ in the open interval
\[
  8.22515\ldots<c<10
\]
would be admissible.  We choose the convenient value
\[
  \rho=\frac{19}{2}\alpha_0^3,
\]
which leaves a margin below the exclusion threshold and also gives a positive
main-term margin.  Then Proposition~\ref{prop:ZZ-R1} and
Proposition~\ref{prop:newR2} give
\[
\begin{aligned}
  \mathcal S_{t,\rho}^{\sigma}(X)
  &\ge
  (1+o(1))
  \left(
  0.76235\cdot\frac{19}{2}
  -
  2\cdot3.13522
  \right)
  \alpha_0^3\wt g(1)\frac X{\log X}  \\
  &=
  (1+o(1))\,0.971885\,\alpha_0^3\wt g(1)\frac X{\log X}.
\end{aligned}
\]
In particular,
\begin{equation}\label{eq:sieve-positive}
  \mathcal S_{t,\rho}^{\sigma}(X)\gg \frac X{\log X}
\end{equation}
for \(X\) sufficiently large.

It remains to identify where this positive contribution can come from.
By Lemma~\ref{lem:threshold},
\[
  \tau_{5,t}(n;\alpha_0)\ge10\alpha_0^3
  >
  \frac{19}{2}\alpha_0^3
  =
  \rho
  \qquad(\om(n)\ge6).
\]
Therefore every term with \(\om(n)\ge6\) contributes non-positively to
\(\mathcal S_{t,\rho}^{\sigma}(X)\).  On the other hand, if \(\om(n)\le5\),
then \(\calB(n)=\varnothing\), and hence
\[
  \tau_{5,t}(n;\alpha_0)=0.
\]
Thus the positive part of \(\mathcal S_{t,\rho}^{\sigma}(X)\) is supported only
on square-free \(P_5\) moduli with sign \(\sigma\Kl(1;n)>0\).

Let
\[
  \calP_\sigma(X)
  =
  \left\{n:
  \begin{array}{l}
    g(n/X)\ne0,\ (n,\Pi_\eps)=1,\ \mu^2(n)=1,\\
    \om(n)\le5,\ \sigma\Kl(1;n)>0
  \end{array}
  \right\}.
\]
For \(n\in\calP_\sigma(X)\), the Weil bound for Kloosterman sums, in the form
\cite[Corollary~11.12]{IwKow04}, gives
\[
  \abs{\Kl(1;n)}
  \le 
  2^{\om(n)}
  \le32.
\]
Moreover, by the construction of the Selberg weights, \(\abs{\lambda_d}\le1\).
Hence
\[
  \left|\sum_{d\mid n}\lambda_d\right|
  \le
  \sum_{d\mid n}1
  =
  2^{\om(n)}
  \le32.
\]
Since \(g\) and \(\rho\) are fixed, each positive summand with
\(\om(n)\le5\) is \(O_{g,\rho}(1)\).  All remaining summands are
non-positive.  Hence
\[
  0<
  \mathcal S_{t,\rho}^{\sigma}(X)
  \ll_{g,\rho}
  \#\calP_\sigma(X).
\]
Together with \eqref{eq:sieve-positive}, this gives
\[
  \#\calP_\sigma(X)\gg_{g,\rho}\frac X{\log X}.
\]
Taking \(\sigma=+1\) and \(\sigma=-1\), respectively, gives positive and
negative Kloosterman values on square-free \(P_5\) moduli in every sufficiently
large dyadic interval.  This proves Theorem~\ref{thm:main-intro}.

\section*{Acknowledgements}
The author would like to thank Igor E. Shparlinski for his helpful discussions and for reading an earlier draft of this manuscript.
This work was supported by the China Scholarship Council.

\end{document}